\documentclass[a4paper,12pt]{amsart}
\usepackage[letterpaper, margin=1.2in]{geometry}
\usepackage{latexsym}
\usepackage{amssymb}
\usepackage{amsmath}
\usepackage{amsfonts}
\usepackage[table]{xcolor}
\usepackage{pgfplots}
\usepackage{color}

\usepackage{txfonts,pxfonts} 
\usepackage{tikz}
\usetikzlibrary{calc,arrows}
\usetikzlibrary{calc}
\usepackage{verbatim}
\newtheorem{thm}{Theorem}[section]

\newtheorem{lem}[thm]{Lemma}

\theoremstyle{definition}

\newtheorem{rem}{Remark}

\def\fph{\mathbb{F}_{\ph}}

\newcommand{\Z}{\mathbb Z}

\newcommand{\Q}{\mathbb Q}

\newcommand{\F}{\mathbb F}

\def\F{\mathbb{F}}
\DeclareMathAlphabet{\mathpzc}{OT1}{pzc}{m}{it}

\newcommand{\p}{\mathfrak{p}}
\def\ol{\overline}
\def\al{\alpha}
\def\la{\lambda}

\def\om{\omega}
\def\th{\theta}
\def\md#1{\ \mbox{\rm(mod }{#1})}
\def\nph#1{N_{\ph}(#1)}
\def\npp#1{N_{\ph}^+(#1)}
\def\ph{\phi}
\newcounter{cs}
\stepcounter{cs}
\newcommand{\casos}{\begin{itemize}}
\newcommand{\fcasos}{\end{itemize}\setcounter{cs}{1}}

\newfont{\tit}{cmr12 scaled \magstep3}
\begin{document}
\title[]{On the index divisors of quintic number fields defined by $x^5+ax+b$}
\textcolor[rgb]{1.00,0.00,0.00}{}
\author{Lhoussain El Fadil}
\keywords{index of a number field, Power integral basis, Theorem of Ore, prime ideal factorization, Newton polygons} \subjclass[2010]{11R04,
11R16, 11R21}
\maketitle
\centerline{Faculty of Sciences Dhar El Mahraz, P.O. Box  1796 Atlas-Fez,}
\centerline{Sidi Mohamed Ben Abdellah University, Fez-- Morocco}
\centerline{ lhoussain.elfadil@usmba.ac.ma}
\begin{abstract}
     The  goal of  this paper is to calculate explicitly the field index of  any quintic number  field $K$ generated  by a complex  root  $\al$ of a monic irreducible trinomial  $F(x) = x^5+ax+b \in \Z[x]$. In such a way we provide  a complete answer to the  Problem 22 of  Narkiewicz \cite{WN}. Namely for every prime integer $p$, we  evaluate the highest power of $p$ dividing $i(K)$. In particular, we give sufficient conditions on $a$ and $b$, which guarantee the  non monogenity of $K$. Finally, we illustrate our results by some computational examples.        
\end{abstract}
\section{Introduction} 
Let $K$ be an number field generated by a root $\al$ of an irreducible polynomial $F(x)\in \Z[x]$. Let $\Z_K$  be its ring of integers. It is well known that $\Z_K$ is a free abelian group of rank $n$; the degree of $F(x)$. It follows that for any primitive element $\al\in \Z_K$ of $K$, the quotient group $\Z_K/\Z[\al]$ is a finite abelian  group is finite. For such a primitive element $\al\in \Z_K$ of $K$, let $ind(\al) = (\Z_K: \Z[\al])$ be the  cardinal order of  $\Z_K/\Z[\al]$, called the index of $\Z[\al]$ in $\Z_K$. The greatest common divisor of the indices of all integral primitive elements of $K$ is called the  index of $K$, and  denoted by $i(K)$. Say $i(K)=\gcd \ \{ ( \Z_K; \Z [\alpha]) \, |\,\alpha \in {\Z}_K \mbox{ and } K=\Q(\al) \}$. A rational prime $p$ dividing $i(K)$ is called a prime common  divisor of $K$. It is clear that if $\mathbb{Z}_K$ has a power integral basis, then the index of $K$ is trivial, namely $i(K)=1$. Therefore a field having a prime divisor is not monogenic. 
 The first  number field with non trivial index was given by Dedekind in 1871,
 	 who exhibited examples in cubic and quartic number fields. For example, he considered the cubic field $K$ generated by a complex root of $x^3-x^2-2x-8$ and  showed that the prime $2$ splits completely in $K$. So, if we suppose that $K$ is monogenic, then we would be able to find a cubic polynomial generating $K$, that splits completely into distinct polynomials of degree $1$ in $\mathbb{F}_2[x]$. Since there are only $2$ distinct polynomials of degree $1$ in $\mathbb{F}_2[x]$, this is impossible. 
In \cite{En2}, for any number field of degree $n\le 7$, Engstrom gave an explicit formula which relates   $\nu_p(i(K))$ to the factorization of  $(p)$ into powers of prime ideals of $K$ for every positive prime $p\le n$. Problem 22 of Narkiewicz \cite{WN} asks for an explicit formula for the highest power of a
given prime integer $p$ dividing $i(K)$, say $\nu_p(i(K))$.
In \cite{Na}, Nakahara  studied the  index of non-cyclic but abelian biquadratic number fields. 
 In \cite{GPP}  Ga\'al et al. characterized  the field indices of biquadratic number fields having Galois group $V_4$. In \cite{DA} for any  quartic number field $K$ defined by a trinomial $x^4 + ax + b$, Davis and  Spearman gave necessary and sufficient conditions on $a$ and $b$, which characterize when $2$ divides $i(K)$.  
Also in   \cite{FG} for any  quartic number field $K$ defined by a trinomial $x^4 + ax^2 + b$, El Fadil and Ga\'al gave necessary and sufficient conditions on $a$ and $b$, which characterize when a prime integer $p$ divides $i(K)$.
  In \cite{E5} for any  quintic number field $K$ defined by a trinomial $x^5 + ax^2 + b$, we gave necessary and sufficient conditions on $a$ and $b$, which characterize when a prime integer $p$ divides $i(K)$. In \cite{ENT, E6} for any  sextic number field $K$ defined by a trinomial, we gave necessary and sufficient conditions, which characterize when a prime integer $p$ divides $i(K)$. 
  In this paper, for any quintic number fields $K$ defined by a trinomial $x^5 + ax + b\in \Z[x]$, we calculate explicitly  the field index of $K$. 
  Contrary to the quoted list, except our quartic paper with Ga\'al \cite{FG4},  we haven't even thought of introducing the value $i(K)$ of the index of the field $K$. Moreover it is the first time that the index $i(K)$ is completely calculated.
Based on Engstrom's results given in \cite{En2},  the unique prime candidates to divide $i(K)$ are $2$ and $3$, say $i(K)=2^{\nu_2}3^{\nu_3}$ , with $0\le \nu_2\le 5$ and $0\le \nu_3\le 2$.  In Theorems \ref{valueind} shows that $0\le \nu_2\le 2$ and $ \nu_3=0$, say $i(K)\in\{1,2,4\}$.
  \section{main results}
Let $a$ and $b$ be two rational integers such that $F(x)=x^5+ax+b$ is irreducible over $\Q$, and let $K = \Q(\al)$ be a number field generated by a root $\al$ of  $F(x)$.
 If $\nu_p(a)\ge 4$ and $\nu_p(b)\ge 5$ for a prime integer $p$, then $\th=\frac{\al}{p}$ is a primitive integral element of $K$, which is a root of the irreducible 
trinomial $x^5 + Ax +B$  having integer coefficients $A=a/p^4$ and $B=b/p^5$. So up to replace $\al$ by $\theta$ and  repeat this process until to get  
either $\nu_p(a)\le 3$ or $\nu_p(b)\le 4$, we can assume that $\nu_p(a)\le 3$ or $\nu_p(b)\le 4$ for every prime integer $p$.
Let $\Z_K$ be the ring of integers of $K$, $\triangle(F)=(2^8 a^5+5^5 b^4)$ the discriminant of $F(x)$ and $d_K$ the absolute discriminant of $K$. Then we have the following well know formula: 
 \begin{eqnarray}\label{indexdisc}
 \triangle(F)=\pm ind(\al)^2d_K,
\end{eqnarray} which relates the index and $\triangle(F)$  and $d_K$.

The following theorem characterizes, when is $\Z[\al]$ integrally closed? It improves \cite[Theorem 3.1]{Smt}, which gave sufficient conditions on $a$ and which guarantee $\Z[\al]$ integrally closed.
\begin{thm}\label{mono}
		Let $K$ be a quintic number field generated by a  root $\al$ of an irreducible trinomial $F(x) = x^{5} +a  x + b  \in \Z[x] $  with $\nu_p(a) \le 3$ or  $\nu_p(b) \le 4 $ for every prime integer $p$.  Then $\Z[\al]$ is  integrally closed if and only if the following hold:
	\begin{enumerate}
			\item
		 For every prime integer $p$, if $p$ divides $a$ and $b$, then  $p^2$ does not divide $b$.
		\item
		$2$ does not divide $b$ or  $b\equiv 1-a\md4$.
			\item
		If $5$ divides $a$, then $5$ does not divide $b$ and $\nu_5(-1+a+b^4)\le 1$.
	\item
	For every prime integer $p\not\in\{2,5\}$, if $p$ does not divide both $a$ and $b$, then $\nu_p(2^8 a^5 + 5^5 b^4)\le 1$.		
		\end{enumerate}
In particular, if these conditions hold, then  $K$ is monogenic and 
 $i(K)=1$.
\end{thm}
\begin{rem}
    The number field $K$ can be monogenic although $\Z[\al]$ is not integrally closed.    
\end{rem}
Indeed, let $K$ be the quintic number field generated by a complex root of $F(x)=x^5+8x+4$. Let $\phi=x$. Since $\ol{F(x)}=\ph^5(x)$ in $\F_2[x]$ and $\nph(F)=S$ has a single side of degree $1$, we conclude that $F(x)$ is irreducible over $\Q$. Since $\triangle(F)=2^8\cdot 11\cdot 13\cdot 251$, the unique prime which is candidate to divide $ind(\al)$ is $2$. Let $\th=\frac{\al^3}{2}$, then $G(x)=x^5+32x-12x^2+2$ is the minimal polynomial of $\th$ over $\Q$. Thus $K=\Q(\th)$. By using Dedekind's criterion, we conclude that $\nu_2(ind(\th))=0$. Let $p$ be a positive odd prime. Since $\Z[\al]\subset \Z[\th]\subset \Z_K$,  we conclude that if $p$ divides $(ind(\th))$, then $p$ divides $(ind(\al))$. Therefore we conclude that $\nu_p(ind(\th))=0$ for every prime integer $p$. Which means that $\Z_K=\Z[\th]$.\\

   Our second main result is  the following theorem, which completely provides the value $i(K)$. Let $\triangle=2^8 a^5 + 5^5 b^4$ and $\triangle_2=\frac{\triangle}{2^{\nu_2(\triangle)}}$. 
\begin{thm}\label{valueind}
	$$\begin{array}{|c|c|c|}
	\hline
 {\mbox conditions}& i(K)\\
 \hline
	     a\equiv 4\md{64} { \mbox{ and }}  \nu_2(b)\ge 6&2\\
	    \hline
      a\equiv 12\md{32}{ \mbox{ and }}  \nu_2(b)\ge 5&2\\
      \hline
	    a\equiv 20\md{64} { \mbox{ and }}  \nu_2(b)=5&2\\
	     \hline
	     a\equiv 1\md{4}{ \mbox{ and }} \nu_2(b+1+a)\ge 2&2\\
	      \hline
	     a\equiv 3\md{8}, \nu_2(b+1+a)=3, \nu_2(\triangle) { \mbox{ is odd }} { \mbox{ and }} \triangle_2\equiv -a\md4&4\\
    \hline
    a\equiv 3\md{8},  \nu_2(b+1+a)=3{ \mbox{ and }} \nu_2(\triangle) { \mbox{ is even}}&2\\
    \hline
	     a\equiv 3\md{8} { \mbox{ and }} \nu_2(b+1+a)=4& 2\\
	     \hline
	     a\equiv 3\md{16} { \mbox{ and }} \nu_2(b+1+a)\ge 6& 4\\
	     \hline
	     a\equiv 11\md{16}, \nu_2(b+1+a)\ge 5, \nu_2(\triangle) { \mbox{ is odd }} { \mbox{ and }} \triangle_2\equiv -a\md4&4\\
	     \hline
       a\equiv 11\md{16}, \nu_2(b+1+a)\ge 5 { \mbox{ and }} \nu_2(\triangle) { \mbox{  is even}}&2\\
	     \hline
	     {\mbox Otherwise }&1\\
 
   \hline
	     	\end{array}$$
       In particular, if $i(K)\neq 1$, then $K$ is not monogenic.
\end{thm}
\section{A short introduction to prime ideal factorization based on Newton polygons}
\label{intro}

In 1894, Hensel
developed a powerful approach by showing that for every prime integer $p$, the prime ideals of $\Z_K$
lying above a  $p$ are in one--one correspondence with
monic irreducible factors of $F(x)$ in $\Q_p[x]$. For every prime ideal
corresponding to any irreducible factor in $\Q_p[x]$, the
ramification index  and the residue degree together are the same as
those of the local field defined  by the associated irreducible factor
\cite{H}. Since then, to factorize $p\Z_K$, we need to factorize $F(x)$ in $\Q_p[x]$.  Newton's polygon techniques can be used to refine the factorization. This is a standard method which is rather technical but
very efficient to apply. We have introduced the corresponding concepts in several former papers.
Here we only give a brief introduction which makes our proofs understandable.
For a detailed description we refer to Section $3$ of El Fadil and Ga\'al's  paper \cite{FG}, and to Guardia, Montes and Nart's paper \cite{GMN}.	
	For every prime integer $p$, let $\nu_p$ be the $p$-adic valuation of $\Q_p$ and $\Z_p$ the ring of $p$-adic integers.  Let $F(x)\in\mathbb{Z}_p[x]$ be a monic polynomial and $\phi\in\mathbb{Z}_p[x]$  a monic lift of an irreducible factor of $\ol{F(x)}$ modulo $p$. Let $F(x)=a_0(x)+a_1(x)\phi(x)+\cdots+a_n(x)\phi(x)^l$ be the $\phi$-expansion of $F(x)$, $\nph{F}$ the $\phi$-Newton polygon of $F(x)$ and $\npp{F}$ its  principal part. Let $\mathbb{F}_{\phi}$ be the field $\mathbb{F}_p[x]/(\overline{\phi})$. For every side $S$ of $\npp{F}$ with length $l$ and initial point $(s,u_s)$,  for every $i=0,\ldots,l$, let  $c_i\in\mathbb{F}_{\phi}$ be the residue coefficient, defined as follows: 
	$$c_{i}=
	\left
	\{\begin{array}{ll} 0,& \mbox{ if } (s+i,{\it u_{s+i}}) \mbox{ lies strictly
			above } S,\\
		\left(\dfrac{a_{s+i}(x)}{p^{{\it u_{s+i}}}}\right)
		\,\,
		\mod{(p,\phi(x))},&\mbox{ if }(s+i,{\it u_{s+i}}) \mbox{ lies on }S.
	\end{array}
	\right.$$
	Let $-\lambda=-h/e$ be the slope of $S$, where $h$ and $e$ are two positive coprime integers. Then  $d=l/e$ is the degree of $S$. Let ${R_{\lambda}(F)(y)}=t_dy^d+t_{d-1}y^{d-1}+\cdots+t_{1}y+t_{0}\in\mathbb{F}_{\phi}[y]$, called  
	the residual polynomial of $F(x)$ associated to the side $S$, where for every $i=0,\dots,d$,  $t_i=c_{ie}$. If ${R_{\lambda}(F)(y)}$ is square free for each side of the polygon $\npp{F}$, then we say that $F(x)$ is $\phi$-regular.\\ 
	Let $\overline{F(x)}=\prod_{i=1}^{r}\ol{\phi_i}^{l_i}$ be the factorization of $F(x)$ into powers of monic irreducible coprime polynomials over $\mathbb{F}_p$, we say that the polynomial $F(x)$ is $p$-{regular} if $F(x)$ is a $\phi_i$-regular polynomial with respect to $p$ for every $i=1,\dots,r$. Let  $N_{\phi_i}^+(F)=S_{i1}+\cdots+S_{ir_i}$ be the $\phi_i$-principal Newton polygon of $F(x)$ with respect to $p$. For every $j=1,\dots,r_i$, let $R_{\lambda_{ij}}(y)=\prod_{s=1}^{s_{ij}}\psi_{ijs}^{a_{ijs}}(y)$ be the factorization of $R_{\lambda_{ij}}(y)$ in $\mathbb{F}_{\phi_i}[y]$. Then we have the following  theorem of index of Ore:
	\begin{thm}\label{ore}$($\cite[Theorem 1.7 and Theorem 1.9]{EMN}$)$\\
		Under the above hypothesis, we have the following:
		\begin{enumerate}
			\item 
			$$\nu_p((\mathbb{Z}_K:\mathbb{Z}[\alpha]))\geq\sum_{i=1}^{r}\text{ind}_{\phi_i}(F).$$  
			The equality holds if $F(x) \text{ is }p$-regular.
			\item 
			If $F(x) \text{ is }p$-regular, then
			$$p\mathbb{Z}_K=\prod_{i=1}^r\prod_{j=1}^{t_i}\prod_{s=1}^{s_{ij}}\mathfrak{p}_{ijs}^{e_{ij}}$$
			is the factorization of $p\mathbb{Z}_K$ into powers of prime ideals of $\mathbb{Z}_K$, where $e_{ij}$ is the smallest positive integer satisfying $e_{ij}\la_{ij}\in \Z$ and the residue degree of $\mathfrak{p}_{ijs}$ over $p$ is given by $f_{ijs}=\deg(\phi_i)\times \deg(\psi_{ijs})$ for every $(i,j,s)$.
		\end{enumerate}
	\end{thm}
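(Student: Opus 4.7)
The plan is to prove Ore's theorem by producing a three-level factorization of $F(x)$ in $\Z_p[x]$, in which each layer corresponds to one indexing variable in the statement: Hensel lifting for the $\ph_i$, the theorem of the polygon for the slopes $\la_{ij}$, and the theorem of the residual polynomial for the $\psi_{ijs}$. Since $\ol{\ph_1},\dots,\ol{\ph_r}$ are pairwise coprime, Hensel's lemma lifts $\ol{F}=\prod\ol{\ph_i}^{l_i}$ to $F(x)=\prod_{i=1}^r F_i(x)$ in $\Z_p[x]$ with $\ol{F_i}=\ol{\ph_i}^{l_i}$. The Chinese remainder theorem then gives $\Z_p\otimes_{\Z}\Z_K\cong\prod_i\Z_p[x]/(F_i)$, so the primes of $\Z_K$ above $p$ are clustered by the index $i$, and each cluster has residue degree divisible by $\deg(\ph_i)$.

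For each $F_i$ I would apply the theorem of the polygon: if $N_{\ph_i}^+(F)=S_{i1}+\cdots+S_{it_i}$ with distinct negative slopes $\la_{ij}=-h_{ij}/e_{ij}$ in lowest terms, then $F_i=\prod_j F_{ij}$ in $\Z_p[x]$, where $N_{\ph_i}^+(F_{ij})$ is the single side $S_{ij}$; any irreducible factor of $F_{ij}$ in $\Q_p[x]$ defines a prime of $K$ with ramification index a multiple of $e_{ij}$. For each $F_{ij}$, the factorization $R_{\la_{ij}}(F)(y)=\prod_s \psi_{ijs}^{a_{ijs}}$ in $\F_{\ph_i}[y]$ lifts via the theorem of the residual polynomial to $F_{ij}=\prod_s F_{ijs}$ in $\Z_p[x]$, where the residual polynomial of $F_{ijs}$ is $\psi_{ijs}^{a_{ijs}}$. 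When $F$ is $p$-regular every $a_{ijs}=1$, so each $F_{ijs}$ is irreducible over $\Q_p$ and defines a single prime $\mathfrak{p}_{ijs}$ with $e(\mathfrak{p}_{ijs})=e_{ij}$ and $f(\mathfrak{p}_{ijs})=\deg(\ph_i)\deg(\psi_{ijs})$; this yields part (2) directly.

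For the index bound, I would combine the classical identity $\nu_p(\mathrm{disc}(F))=\nu_p(d_K)+2\,\nu_p((\Z_K:\Z[\al]))$ with the multiplicativity $\mathrm{disc}(F)=\prod_i \mathrm{disc}(F_i)\cdot\prod_{i\ne k}\mathrm{Res}(F_i,F_k)^2$: the resultants are $p$-adic units by Hensel, so $\nu_p(\mathrm{disc}(F))=\sum_i \nu_p(\mathrm{disc}(F_i))$. A direct computation in terms of the $\ph_i$-expansion shows that $\nu_p(\mathrm{disc}(F_i))$ is bounded below by $2\deg(\ph_i)$ times the number of integer lattice points lying strictly above the horizontal axis, strictly beyond the vertical axis, and on or below $N_{\ph_i}^+(F)$, plus the inevitable contribution absorbed by $d_K$. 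Rearranging gives $\nu_p((\Z_K:\Z[\al]))\ge \sum_i \mathrm{ind}_{\ph_i}(F)$. When $F$ is $p$-regular, each $F_{ijs}$ is Eisenstein over the unramified extension of $\Q_p$ of residue degree $\deg(\ph_i)\deg(\psi_{ijs})$, so its discriminant is computed exactly from its slope, and summing recovers equality.

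The main obstacle is the sharpness of the index inequality under $p$-regularity: one must verify that the three-layer decomposition captures all $p$-adic structure, i.e.~that the sum of the local discriminants of the $F_{ijs}$ exhausts $2\sum_i \mathrm{ind}_{\ph_i}(F)+\nu_p(d_K)$ with nothing left over. When some $a_{ijs}\ge 2$ the residual polynomial no longer separates the roots of $F_{ijs}$, a second-order Newton polygon is needed to refine further, and this hidden layer is precisely what forces the inequality to be strict in the non-regular case. Handling this bookkeeping cleanly is what makes the $p$-regular hypothesis the natural sufficient condition for equality.
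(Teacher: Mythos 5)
The paper never proves this statement: Theorem \ref{ore} is quoted in the Preliminaries as the classical theorem of Ore, with the proof delegated to the cited sources (Ore, Montes--Nart, Gu\`ardia--Montes--Nart, and \cite{El}). So there is no internal proof to compare against, and your sketch can only be measured against those references. Your architecture is the standard and correct one: Hensel lifting over the pairwise coprime $\ol{\ph_i}$ to get $F=\prod_i F_i$, the theorem of the polygon to split each $F_i$ along the sides $S_{ij}$, and the theorem of the residual polynomial to split each $F_{ij}$ according to the factors $\psi_{ijs}$, with $p$-regularity forcing every $a_{ijs}=1$ and hence irreducibility of each $F_{ijs}$ over $\Q_p$, which gives part (2) with the stated $e_{ij}$ and $f_{ijs}$. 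That is exactly how the cited works proceed.

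Part (1), however, is where your proposal has genuine gaps. The step you call ``a direct computation'' is the heart of the theorem: after reducing to $\nu_p(\mathrm{disc}(F))=\sum_i\nu_p(\mathrm{disc}(F_i))$, one must show that the valuations of the pairwise resultants $\mathrm{Res}(F_{ijs},F_{ij's'})$, computed from the slopes and lengths of the sides, sum to at least (and, in the regular case, exactly) the lattice-point count $\mathrm{ind}_{\phi_i}(F)$ --- this is Ore's ``theorem of the index'' and it is a nontrivial combinatorial lemma, not something that can be absorbed into ``the inevitable contribution absorbed by $d_K$'', since $\nu_p(d_K)$ is not independently known at this stage. Moreover, your argument for equality is incorrect as stated: an $F_{ijs}$ with $p\mid e_{ij}$ generates a wildly ramified extension, its discriminant is \emph{not} determined by the slope alone (the different exceeds $e_{ij}-1$), and so ``summing the local discriminants'' does not recover equality. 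The point of working with the index rather than the discriminant is precisely that the differents of the local fields cancel against $\nu_p(d_K)$; what must actually be proved in the regular case is that each local index $\nu_p\bigl((\mathcal{O}_{L_{ijs}}:\Z_p[\theta_{ijs}])\bigr)$ vanishes, i.e.\ that a root of $F_{ijs}$ generates the full local ring of integers when the residual factor is simple. That statement needs its own argument and is missing from your sketch.
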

	\smallskip
    When  the theorem  of Ore fails, that is $F(x)$ is not $p$-regular, then in order to complete the factorization of $F(x)$, Guardia, Montes, and Nart introduced the notion of {\it high order Newton polygon}.   For more details, we refer to \cite{GMN}.
    \section{Proofs of  main results}
		\begin{proof}[Proof of Theorem \ref{mono}]	
	Since $\triangle(F)=5^5\cdot b^4+2^8\cdot a^5 $, thanks to the formula (\ref{indexdisc}), it suffices to show that   $\nu_p(ind(\al))=0$ for every prime integer dividing 	$(2^8\cdot a^5 + 5^5\cdot b^4)$.
		\begin{enumerate}
		\item
		For every prime integer  $p$, if $p$  divides $a$ and $b$, then by By Theorem \ref{ore}, $p^2$ does not divide $b$ is a necessary and sufficient condition so that $p$ does not divide $ind(\al)$.
		    \item 
		    For $p=2$, if $2$ does not divide $b$, then $2$ does not divide $\triangle(F)$, and so $\nu_2(ind(\al))=0$. If  $2$  divide $b$ and does not divides $a$, then $\ol{F(x)}=x\ph(x)^4$, where $\ph(x)= x-1$.  By Theorem \ref{ore},  $\nu_2(ind(\al))=0$ if and only if $ind_\ph(F)=0$, which means that $\nu_2(1+a+b)=1$.  
		    \item 
		    For $p=5$, if $5$ does divide $a$, then $5$ does not divide $\triangle(F)$, and so $5$ does not divide $ind(\al)$. If $5$    divides $a$ and does not divides $b$, then $\ol{F(x)}=\ph(x)^5$, where $\ph(x)= x+b$.  By  Theorem \ref{ore},  $\nu_p(ind(\al))=0$ if and only if $ind_\ph(F)=0$, which means that
		    $\nu_5(b-ab-b^5)=1$. That means  $\nu_5(1-a-b^4)=1$.   
		    \item 
		    For $p\not\in\{2,5\}$, if $p$ divides $a$ and $p$ does divide $b$ or $p$ divides $b$ and $p$ does divide $a$, then $p$ does not divide  $\triangle(F)$. By the formula (\ref{indexdisc}),  $p$ does not divide $ind(\al)$. For the same reason, if    $p^2$ does not divide  $\triangle(F)$, then  $p$ does not divide  $ind(\al)$. Now assume that  $p$ does not divide $ab$ and $p^2$ divides  $\triangle(F)$. A square root of $\ol{F(x)}$ if there exists is an element $\ol{u}\in \F_p$, defined by $u\in\Z$ such that $\ol{F(u)}=\ol{F'(u)}=0$. That is  $u\in\Z_p$ with $4au\equiv -5b \md{p^r}$ for an adequate natural integer $r$. Let $u=\frac{-5b}{4a}$. Since $p$ does not divide $4a$, then $u\in\Z_p$. Let $F(x+u)=x^5+5ux^4+10u^2x^3+10u^3x^3+Ax+B$  with $A=5u^4+a$ and $B=b+au+u^5$. So $A=\frac{ 2^8a^5+5^5b^4}{(4a)^4}$ and $B=\frac{-b(2^{8}a^5+5^5b^4}{2^{10}a^5}= -b\frac{ \triangle(F)}{2^{10}a^5}$. Thus $\nu_p(A)=\nu_p(B)=\nu_p(\triangle(F))$. So for $\ph=x-u$, $\ol{ph(x)}^2$ divides  $\ol{F(x)}$ and  $\npp{F}=S$ has a single side joining $(0,\nu_p(\triangle(F))$ and $(2,0)$.  It follows by Theorem \ref{index},  that $ind(\al)\ge ind_\ph(F)\ge \lfloor \triangle(F)/2 \rfloor\ge 1$, where for every real number $x$, $\lfloor x \rfloor$ is the integral part of $x$.    
	\end{enumerate}
\end{proof}
		 
			{For the proof of Theorem \ref{valueind}, we need the following lemma, which  characterizes the prime  divisors of $i(K)$}.
			
	 	\begin{lem}\label{index}
		Let $p$ be a rational prime integer and $K$ be a number field. For every positive integer $f$, let $\mathcal{P}_f$ be the number of distinct prime ideals of $\mathbb{Z}_K$ lying above $p$ with residue degree $f$ and $\mathcal{N}_f$ the number of monic irreducible polynomials of $\mathbb{F}_p[x]$ of degree $f$.  Then $p$ divides $i(K)$ if and only if $\mathcal{P}_f>\mathcal{N}_f$ for some positive integer $f$.
	\end{lem}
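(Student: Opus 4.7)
The plan is to establish each direction of the equivalence using Dedekind's criterion (Theorem \ref{Ded}) together with a local--global construction of a suitable integral primitive element.

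For the easy direction, I would prove the contrapositive $p\nmid i(K)\Rightarrow \mathcal{P}_f\le\mathcal{N}_f$ for every $f$. If $p$ does not divide $i(K)$, then by definition of the index there exists $\alpha\in\Z_K$ with $K=\Q(\alpha)$ such that $p\nmid(\Z_K:\Z[\alpha])$. Applying Dedekind's theorem to the minimal polynomial $F_\alpha(x)$ of $\alpha$, the prime ideal factorisation $p\Z_K=\prod_{i=1}^r \mathfrak{p}_i^{l_i}$ matches the factorisation $\overline{F_\alpha(x)}=\prod_{i=1}^r\overline{\phi_i(x)}^{l_i}$ into coprime irreducible powers in $\F_p[x]$, with $f(\mathfrak{p}_i/p)=\deg(\phi_i)$. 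Consequently the prime ideals above $p$ of residue degree $f$ correspond bijectively to distinct monic irreducible polynomials of degree $f$ in $\F_p[x]$, giving $\mathcal{P}_f\le\mathcal{N}_f$ for every $f$.

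For the harder converse, the plan is to construct a primitive element $\theta\in\Z_K$ of $K$ satisfying $p\nmid(\Z_K:\Z[\theta])$. Write $p\Z_K=\prod_{i=1}^r\mathfrak{p}_i^{e_i}$ with residue degrees $f_i$. The counting hypothesis $\mathcal{P}_f\le\mathcal{N}_f$ lets me pick, for each $i$, a monic irreducible $g_i(x)\in\F_p[x]$ of degree $f_i$ so that the chosen $g_i$'s are pairwise distinct (a pigeonhole grouped by the common value of $f$). For each $i$ I would then select $\vartheta_i\in\Z_K$ whose image in $\Z_K/\mathfrak{p}_i\cong\F_{p^{f_i}}$ is a root of $g_i$, paste these together via the Chinese Remainder Theorem into a single $\theta\in\Z_K$, and if necessary adjust by a rational integer to ensure $K=\Q(\theta)$ (using that $K$ has only finitely many proper subfields). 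The claim is then that the minimal polynomial $F_\theta(x)\in\Z[x]$ reduces modulo $p$ to $\prod_{i=1}^r g_i(x)^{e_i}$: the factor $g_i$ appears with multiplicity at least $e_i$ at $\mathfrak{p}_i$, and the degree identity $\sum_i e_if_i=n$ together with distinctness of the $g_i$ forces equality. This is precisely the shape to which Dedekind's theorem applies in the positive direction, yielding $p\nmid(\Z_K:\Z[\theta])$, whence $p\nmid i(K)$.

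The main obstacle I anticipate is verifying that $\theta$ genuinely realises the multiplicities $e_i$ in the factorisation of $\overline{F_\theta(x)}$, or equivalently that $\F_p[\bar\theta]$ fills out all of $\Z_K/p\Z_K$ as an $\F_p$-algebra of dimension $n$. To handle this rigorously I would work locally at each $\mathfrak{p}_i$: use Hensel's lemma to lift $g_i$ to a polynomial cutting out the maximal unramified sub-extension of $K_{\mathfrak{p}_i}/\Q_p$ of residue degree $f_i$, and then refine the choice of $\vartheta_i$ so that its image generates the whole local ring $\Z_K/\mathfrak{p}_i^{e_i}$ as an $\F_p$-algebra of dimension $e_if_i$ (adjusting $\vartheta_i$ modulo $\mathfrak{p}_i^{e_i}$ by a local uniformiser when necessary). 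The Chinese Remainder paste then produces $\bar\theta\in\Z_K/p\Z_K$ with $\F_p[\bar\theta]=\Z_K/p\Z_K$, and Nakayama's lemma on the semi-local ring obtained by localising $\Z_K$ at the primes above $p$ upgrades this mod-$p$ equality to an equality of $\Z_{(p)}$-orders, which is equivalent to the desired index statement $p\nmid(\Z_K:\Z[\theta])$.
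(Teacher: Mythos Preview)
The paper does not give its own proof of this lemma: it is quoted verbatim from \cite[Theorem~2.2]{HS} and used as a black box throughout (see the sentence ``For the proofs of Theorems \ref{npib2}, \ref{npib3}, and Proposition \ref{five}, we need the following lemma\ldots'' and the citation in the statement). There is therefore nothing in the paper to compare your argument against.

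That said, your outline is the classical Hensel--Dedekind argument and is essentially correct, with one small slip worth flagging. In your first pass you propose to ``adjust by a rational integer to ensure $K=\Q(\theta)$''. Adding a rational integer $c$ to $\theta$ does not change the field it generates, since $\Q(\theta+c)=\Q(\theta)$; so this step cannot repair a non-primitive $\theta$. Fortunately the repair is unnecessary once you carry out your own refinement in the last paragraph: as soon as you arrange $\F_p[\bar\theta]=\Z_K/p\Z_K$, the images $1,\bar\theta,\ldots,\bar\theta^{\,n-1}$ are $\F_p$-linearly independent, hence $1,\theta,\ldots,\theta^{n-1}$ are $\Q$-linearly independent, and $K=\Q(\theta)$ follows automatically. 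So your ``obstacle'' paragraph is really the heart of the proof, and the earlier primitivity fix should simply be deleted.

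One further remark: the sentence ``this is precisely the shape to which Dedekind's theorem applies in the positive direction'' is a little optimistic, since Theorem~\ref{Ded} as stated in the paper is a criterion in terms of $M(x)$, not in terms of the factorisation pattern alone. You yourself supply the correct bridge a few lines later (the equality $\F_p[\bar\theta]=\Z_K/p\Z_K$ together with Nakayama on the semi-localisation at $p$), and that is indeed the clean way to conclude $p\nmid(\Z_K:\Z[\theta])$.
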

	\begin{proof}[Proof of Theorem \ref{valueind}]
First, we will show that for every odd prime integer, $\nu_p(i(K))=0$. By virtue of Lemma \ref{index} and Engstrom's results, we know that for every prime integer $p\ge 5$, $p$ does not divide $i(K)$. So, we have to show that $3$ does not divide $i(K)$.  By virtue of Engstrom's results, we need to factorize the ideal $(3)$ into powers of prime ideals of $\Z_K$ lying above $3$. Recall  that if $\ol{F(x)}$ is square free in $\F_3[x]$, then by Dedekind's criterion, $3$ does not divide $(\Z_K:\Z[\al])$, and so  $3$ does not divide $i(K)$. Assume that  $\ol{F(x)}$ has a square factor in $\F_3[x]$. That is $(a,b)\in\{(0,0),(1,1),(1,-1)\}\md3$. In these cases $3$ divides $i(K)$ if and only if there are at least four prime ideals of $\Z_K$ lying above $3$ with residue degree $1$ each.
	\begin{enumerate}
	\item 
	If $(a,b)\equiv (0,0)\md3$, then $\ol{F(x)}= \ph^5$ in $\F_3[x]$ with $\ph = x$. If $\nph{F}=S$ has a single side, then since by assumption, $\nu_3(a)\le 1$ or $\nu_3(b)\le 4$, we conclude that  $d(S)=1$, and so by Theorem \ref{ore}, $3\Z_K=\p_{111}^5$, with $\p_{111}$ the unique prime ideal of $\Z_K$ lying above $3$. 
	If $\nph{F}=S_1+S_2$ has two sides, then by assumption, $\nu_3(a)\le 3$ and $\nu_3(a) < \nu_3(b)$. Thus 
	$d(S_1)=1$ and $d(S_2)\in\{1,2\}$. It follows that $S_1$ provides a unique prime ideal of $\Z_K$ lying above $3$ with residue degree $1$ and ramification index $1$ and $S_2$  provides at most two prime ideals of $\Z_K$ lying above $3$ with residue degree $1$ each. Therefore $3$ does not divide $i(K)$.
		\item 
		If $(a,b)\equiv (1,1),(1,-1)\}\md3$, then $\ol{F(x)}= \ph^2(x^3-x^2+1)$ in $\F_3[x]$  with $\ph = x-1$.  By Theorem \ref{ore}, there are at most two prime ideals of $\Z_K$ lying above $3$ with residue degree $1$ each. Thus  $3$ does not divide $i(K)$.
			\item 
		Similarly if $(a,b)\equiv (1,-1)\}\md3$, then $\ol{F(x)}= \ph^2(x^3+x^2-1)$ in $\F_3[x]$  with $\ph = x+1$.  By Theorem \ref{ore}, there are at most two prime ideals of $\Z_K$ lying above $3$ with residue degree $1$ each. Thus  $3$ does not divide $i(K)$.
		\end{enumerate}
  Now let us calculate   $\nu_2(i(K))$.
By virtue of Lemma \ref{index} and Engstrom's results, we need to factorize the ideal $(2)$ into powers of prime ideals of $\Z_K$ lying above $2$. Since $\triangle(F)=5^5 b^4+2^8 a^5$, by the formula (\ref{indexdisc}), we conclude that if $2$ does not divide $b$, then  $2$ does not divide $ind(\al)$, and so does not divide  $i(K)$. Now assume that $2$ divides $b$. 
	\begin{enumerate}
	\item 
	If	$2$  divides both of  $a$ and $b$, then $ \ol{F(x)}= \ph^5$ in $\F_2[x]$ with $\ph = x$. It follows that 
	if $\nph{F}=S$ has a single side, then since by assumption, $\nu_2(a)\le 3$ or $\nu_2(b)\le 4$, we conclude that  $d(S)=1$, and so by Theorem \ref{ore},  there is a unique prime ideal of $\Z_K$ lying above $2$. If  $\nph{F}=S_1+S_2$ has two sides; $5\nu_2(a)-1\ge 4\nu_2(b)$, then  as by assumption, $\nu_2(a)\le 3$ or  $\nu_2(b)\le 4$, we conclude that $\nu_2(a)\in\{ 1,2,3\}$ and $\nu_2(a) < \nu_2(b)$. Thus  $d(S_1)=1$ and $d(S_2)\in\{1,2\}$. If $d(S_2)=1$, then $S_2$ provides a unique prime ideal of $\Z_K$ lying above $2$ with residue degree $1$ and ramification index $1$. Therefore if $\nu_2(a)\in\{1,3\}$, then  by Theorem \ref{ore}, $2\Z_K=\p_{111}^4\p_{121}$. If $\nu_2(a)=2$,  then $S_2$ is of degree $2$ and its attached residual polynomial  is $F_{S_1}(y)=(y+1)^2$. Thus we have to use second order Newton polygon techniques. For this reason let $\ph_2=x^2+2$, $\om$ the second order valuation associated to the data $(x,1/2, y+1)$, and consider the $\ph_2$ expansion of $F(x)$. Let $N_2$ be the principal $\ph_2$-Newton polygon of $F(x)$ with respect to $\om$.
	Since $2$ is the  degree of $S_2$, then $S_2$ can provide  a unique prime ideal of $\Z_K$ lying above $2$ with residue degree $2$ and ramification index $2$ or two prime ideals of $\Z_K$ lying above $2$ with residue degree $1$ and ramification index $2$ each. It follows that  $2$ divides $i(K)$ if and only if $S_2$ provides two prime ideals of $\Z_K$ lying above $2$ with residue degree $1$ and ramification index $2$ each.  We have the following sub-cases:
		\begin{enumerate}
		\item
	    If $\nu_2(b)\ge 4$ and $a\equiv 4\md{32}$, then for $\ph_2=x^2+2x+2$, we have 	$F(x)=(x-4)\ph_2^2+8(1+x)\ph_2+(a-4)x+b$ is the $\ph_2$ expansion of $F(x)$. Since $\om((x-4)\ph_2^2)=5$ and $\om(8(1+x)\ph_2)=8$, we conclude that  if $\nu_2(b)\in\{4,5\}$, then $N_2$ has a single side of degree $1$. If $\nu_2(b)\ge 6$ and  $\nu_2(a-4)=5$, then  $N_2$ has a single side of degree $2$ and $S_2$ provides a unique ideal of $\Z_K$ lying above $2$. If   $\nu_2(b)\ge 6$ and $\nu_2(a-4)\ge 6$, then  $N_2$ has  two sides, and so $2\Z_K=\p_1\p_2^2\p_3^2$ with residue degree $1$ each. 
	    \item      
	      If  $\nu_2(b)\ge 4$ and $a\equiv 20\md{32}$, then for  $\ph_2=x^2-2x-2$, we have $F(x)=(x+4)\ph_2^2+(16x+24)\ph_2+(a+44)x+b+32$ is the $\ph_2$ expansion of $F(x)$. Since $\om((x+4)\ph_2^2)=5$ and $\om(16x+24)\ph_2)=8$, we conclude that 
        if $\nu_2(b)=4$, then $N_2$ has a single side of degree $1$, and so $2\Z_K=\p_1\p_2^4$ with residue degree $1$ each prime ideal factor. If $\nu_2(b)\ge 5$ and  $\nu_2(a-4)=5$, then  $N_2$ has a single side of degree $2$ and $S_2$ provides a unique ideal of $\Z_K$ lying above $2$. More precisely, $2\Z_K=\p_1\p_2^2$ with $f_2=2$.  If   $\nu_2(b)=5$ and $\nu_2(a+44)\ge 6$, then  $N_2$ has  two sides, and so $2\Z_K=\p_1\p_2^2\p_3^2$ with residue degree $1$ each. 
       	    	    \item
	    If $\nu_2(b)=4$ and $a\equiv 12\md{16}$, then for $\ph_2=x^2+2$, we have 	$F(x)=x\ph_2^2-4x\ph_2+(a+4)x+b$ is the $\ph_2$ expansion of $F(x)$, and so $N_2$ has a single side of degree $1$.
	    \item
	    If $\nu_2(b)\ge 5$ and $a\equiv 12\md{32}$, then for $\ph_2=x^2+2$, we have 	$F(x)=x\ph_2^2-4x\ph_2+(a+4)x+b$ is the $\ph_2$ expansion of $F(x)$ and $N_2$ has a single side of degree $2$ and $R_{\la}(F)(y)=y^2+y+1$, which is irreducible over $\fph$.	    
	    \item
	    If $\nu_2(b)\ge 5$ and $a\equiv 28\md{32}$, then for $\ph_2=x^2+2$, we have 	$F(x)=x\ph_2^2-4x\ph_2+(a+4)x+b$ is the $\ph_2$ expansion of $F(x)$.  Since $\om(x\ph_2^2)=5$,  $\om(-4x\ph_2)=7$  and $\om((a+4)x+b)\ge 10$, we conclude that    $N_2$ has two  sides of degree $1$ each. Thus $2\Z_K=\p_1\p_2^2\p_3^2$ with residue degree $1$ each.
	\end{enumerate}
			\item 
	If	$2$ does not divide $a$ and $2$  divide $b$, then  $ \ol{F(x)}= x\ph^4$ in $\F_2[x]$, where $\ph = x-1$. Since $N_{x}^+(F)$ has a single side of length one, $x$ provides a unique prime ideal of $\Z_K$ lying above $2$ with residue degree $1$.
	
	 For $\ph$, let 
	$F(x)=\ph^5+5\ph^4+10\ph^3+10\ph^2+(5+a)\ph+(1+a+b)$. It follows that:	
		\begin{enumerate}
	    \item
	     If $b\equiv 1-a \md4$, then $\nu_2(1+a+b)=1$, and so  $2$ does not divide $ind(\al)$, and so $2$ does not divide $i(K)$.	      
	    	     	       \item
	     If $b\equiv 3-a \md4$ and $a\equiv 1\md4$, then $\nu_2(1+a+b)\ge  2$ and $\nu_2(5+a)=1$. Thus $\npp{F}=S_1+S_2$ has two sides joining  $(0,v)$, $(1,1)$ and $(4,0)$ with $v\ge 2$. So the sides of $\npp{F}$ are of  degree $1$ each. Hence $\ph$ provides  two prime ideals of $\Z_K$ lying above $2$ with residue degree $1$ each. Therefore $2\Z_K=\p_{111}\p_{211}\p_{221}^3$ with residue degree $1$ each prime ideal factor. It follows that $2$ divides $i(K)$ and $\nu_2(i(K))=1$.	      	      
	      \item
	      If $b\equiv 3-a \md8$ and $a\equiv 3\md4$, then
	      $\nu_2(1+a+b)= 2$ and $\nu_2(5+a)\ge 2$. Thus $\npp{F}=S$ has a single side joining  $(0,2)$, $(2,1)$ and $(4,0)$, with $F_{S}(y)=y^2+y+1$  irreducible over $\fph$. Therefore  $\ph$ provides a unique prime ideal of $\Z_K$ lying above $2$ with residue degree $2$. Hence   $2\Z_K=\p_{111}\p_{211}^2$ with residue degrees $f_{111}=1$ and $f_{211}=2$. Consequently, $2$ does not divide $i(K)$.	      
	      \item
	      If $b\equiv -(1+a) \md{16}$ and $a\equiv 7\md8$, then
	      $\nu_2(1+a+b)\ge 4$ and $\nu_2(5+a)= 2$. Thus $\npp{F}=S_1+S_2+S_3$ has three  sides joining  $(0,v)$, $(1,2)$, $(2,1)$ and $(4,0)$ with $v\ge 4$. Thus every side  is of degree $1$. Therefore   $2\Z_K=\p_{111}\p_{211}^2\p_{221}\p_{231}$ with residue degrees $f_{111}= f_{211}=f_{221}=f_{231}=1$. Consequently, $\nu_2(i(K))=2$.
	      \item
	      If $b\equiv 8-(1+a) \md{16}$ and $a\equiv 7\md8$, then
	      $\nu_2(1+a+b)=3$ and $\nu_2(5+a)= 2$. Thus $\npp{F}=S_1+S_2$ has two  sides joining  $(0,3)$, $(1,2)$, $(2,1)$ and $(4,0)$ such that  $F_{S_1}(y)=y^2+y+1$  irreducible over $\fph$ and $F_{S_1}(y)$ is of degree $1$. Therefore   $2\Z_K=\p_{111}\p_{211}^2\p_{221}$ with residue degrees $f_{111}= f_{211}=1$ and  $f_{221}=2$. Consequently, $2$ does not divide $i(K)$.	      
	       	       \item
	      If $b\equiv 8-(1+a) \md{16}$ and $a\equiv 3\md{8}$, 
	      then $\nu_2(\triangle(F))\ge 11$. As in the proof of Theorem \ref{mono}, let $b_2=\frac{b}{4}$ and $u=\frac{-5b_2}{a}$. Since $2$ does not divide $a$, then $u\in\Z_2$. Let $F(x+u)=x^5+5ux^4+10u^2x^3+10u^3x^3+Ax+B$  with $A=5u^4+a$ and $B=b+au+u^5$. As we did  in the proof of Theorem \ref{mono}, we have $A=\frac{ 2^8a^5+5^5b^4}{(4a)^4}=\frac{ \triangle(F)}{2^{8}a^4}$ and $B=F(u)=\frac{-b_2(2^8a^5+5^5b^4}){2^{8}a^5}= -b_2\frac{ \triangle(F)}{2^{8}a^5}$. Thus $\nu_2(A)=\nu_2(B)=\nu_2(\triangle(F))-8$.
	      	       It follows that if $\nu_2(\triangle(F))$ is even, then  for $\ph=x-u$, $\npp{F}=S_1+S_2$ has two sides joining $(0,\nu_p(\triangle(F))-8)$, $(2,1)$ and $(4,0)$. So,  $2\Z_K=\p_{111}\p_{211}^2\p_{221}^2$ with residue degrees $f_{111}= f_{211}=f_{221}=1$. Consequently, $\nu_2(i(K))=1$.
	       If $\nu_2(\triangle(F))=2k+9$ is odd, then  for $\ph=x-u$, $\npp{F}=S_1+S_2$ has two sides joining $(0,2k+1)$, $(2,1)$ and $(4,0)$. 
        Let us replace $\ph$ by $x-(u+2^k)$ and consider $F(x+u+2^k)=x^5+5(u+2^k)x^4+10(u+2^k)^2x^3+10(u+2^k)^3x^3+Ax+B$  with $A=5(u+2^k)^4+a$ and $B= b+a(u+2^k)+(u+2^k)^5$. Thus $A\equiv 5u^4+a+2^{k+2}u^3 \md{2^{2k+1}}$ and  $B\equiv F(u)+2^k\cdot (a+5u^4)+5\cdot 2^{2k+1}u^3 \md{2^{4k+1}}$. Therefore, $\nu_2(A)=k+2$ and $\nu_2(B)\ge 2k+2$.  
	       Since $F(u)=-b_2\frac{ \triangle(F)}{2^{8}a^5}$ and $u^3\equiv -b_2a\md{4}$, we conclude that  $B\equiv 2^{2k+1}(-b_2\triangle_2+5u^3)\md{2^{3k+1}}$, where $\triangle_2=\frac{\triangle(F)}{2^{2k+9}}$. Hence  if $\triangle_2\equiv a \md4$, then $\nu_2(B)= 2k+2$, and so $\npp{F}$ has two sides joining $(0,2k+2)$, $(1,k+2)$, and $(4,0)$ with $F_{S_1}(y)=y^2+y+1$, which is irreducible over $\fph$. Thus $2\Z_K=\p_{111}\p_{211}^2\p_{221}$  with residue degrees $f_{111}= f_{211}=1$ and $f_{221}=2$. In this case  $2$ does not divide $i(K)$.	      	       
	       If $\triangle_2\equiv -a \md4$, then $\nu_2(B)\ge 2k+3$, and so $\npp{F}$ has three sides joining $(0,v)$, $(1,k+2)$, and $(4,0)$ with $v\ge 2k+3$. Thus $2\Z_K=\p_{111}\p_{211}^2\p_{221}\p_{231}$ with residue degree $1$ each prime ideal factor. Consequently, $\nu_2(i(K))=2$.
	       	       \item 
	      If $b\equiv 16-(1+a) \md{32}$ and $a\equiv 3\md{16}$, then
	      $\nu_2(1+a+b)= 4$ and $\nu_2(5+a)= 3$. Thus $\npp{F}=S_1+S_2$ has two  sides joining  $(0,4)$, $(2,1)$ and $(4,0)$. Thus every side  is of degree $1$. Therefore   $2\Z_K=\p_{111}\p_{211}^2\p_{221}^2$ with residue degrees $f_{111}= f_{211}=f_{221}=1$. Consequently, $\nu_2(i(K))=1$.
	       \item
	      If $b\equiv 32-(1+a) \md{64}$ and $a\equiv 3\md{16}$, then
	      $\nu_2(1+a+b)= 5$ and $\nu_2(5+a)= 3$. Thus $\npp{F}=S_1+S_2$ has two  sides joining  $(0,5)$, $(1,3)$, $(2,1)$ and $(4,0)$. Thus  $F_{S_1}(y)=y^2+y+1$  irreducible over $\fph$ and $S_2$ is  of degree $1$. Therefore   $2\Z_K=\p_{111}\p_{211}^2\p_{221}$ with residue degrees $f_{111}= f_{211}=1$ and  $f_{221}=2$. Consequently, $2$ does not divide $i(K)$.	      
	           \item
	      If $b\equiv -(1+a) \md{64}$ and $a\equiv 3\md{16}$, then
	      $\nu_2(1+a+b)\ge 6$ and $\nu_2(5+a)= 3$. Thus $\npp{F}=S_1+S_2+S_3$ has three  sides joining  $(0,v)$, $(1,3)$, $(2,1)$ and $(4,0)$ with $v\ge 6$. Thus every side  is of degree $1$. Therefore   $2\Z_K=\p_{111}\p_{211}^2\p_{221}\p_{231}$ with residue degrees $f_{111}= f_{211}=f_{221}=f_{231}=1$. Consequently, $\nu_2(i(K))=2$.
	     	       \item
	      If $b\equiv 16-(1+a) \md{32}$ and $a\equiv 11\md{16}$, then
	      $\nu_2(1+a+b)= 4$ and $\nu_2(5+a)\ge 4$. Thus $\npp{F}=S_1+S_2$ has two  sides joining  $(0,4)$, $(2,1)$ and $(4,0)$. Thus every side  is of degree $1$. Therefore   $2\Z_K=\p_{111}\p_{211}^2\p_{221}^2$ with residue degrees $f_{111}= f_{211}=f_{221}=1$. Consequently, $\nu_2(i(K))=1$.
	       \item
	      If $b\equiv -(1+a) \md{32}$ and $a\equiv 11\md{16}$, then $\nu_2(\triangle(F))\ge 12$. Under the notation of the item $(2) (f)$, if $\nu_2(\triangle(F))$ is even, then  for $\ph=x-u$, $\npp{F}=S_1+S_2$ has two sides joining $(0,\nu_p(\triangle(F))-8$, $(2,1)$ and $(4,0)$, and so  $2\Z_K=\p_{111}\p_{211}^2\p_{221}^2$ with residue degrees $f_{111}= f_{211}=f_{221}=1$. Consequently, $\nu_2(i(K))=1$.
	       If $\nu_2(\triangle(F))=2k+9$ is odd and $\triangle_2\equiv a \md4$, then  $\nu_2(i(K))=0$.	       
	       If $\nu_2(\triangle(F))=2k+9$ is odd and $\triangle_2\equiv -a \md4$, then $\nu_2(i(K))=2$.
	     	\end{enumerate}		
\end{enumerate}
\end{proof}
   {\bf Conflict of interest}\\
  Not Applicable.\\
  {\bf Data availability}\\
  Not applicable.


\begin{thebibliography}{00}
	\bibitem{Co} H. Cohen,  A Course in Computational Algebraic Number Theory,  \textit{GTM 138, Springer-Verlag Berlin  Heidelberg}, (1993).
\bibitem{R} R. Dedekind, \"Uber den Zusammenhang zwischen der Theorie der Ideale und der Theorie der h\"oheren Kongruenzen, \textit{G\"ottingen Abhandlungen}, \textbf{23}, (1878), 1--23.
\bibitem{DA}, C. T. Davis and B. K. Spearman, 
{\em The index of a quartic field defined by a trinomial $x^4 + ax + b$}, J. of Alg. and Its Applications, 
{\bf  17(10)} (2018) 1850197
\bibitem {ENT} L. El Fadil, {\em On non monogenity of certain number fields defined by a trinomial $x^6+ax^3+b$},  J. Number Theory, {\bf 239} (2022)  489--500
\bibitem {E5} L. El Fadil, {\em On common index divisor and  monogenity of certain number fields defined by a trinomial $x^5+ax^2+b$},  Commun. Algebra, {\bf 50(7)} (2022)  3102--3112 
\bibitem{E6} L. El Fadil, {\em On common index divisor and  monogenity of certain number fields defined by a trinomial $x^6+ax+b$}, Quaestiones Mathematicae,  (2022)  doi.org/10.2989/16073606.2022.2110537 
\bibitem {FG} L. El Fadil and I. Ga\'al, {\em On integral bases and monogeneity of pure octic number fields with non-square free parameters} ( arxiv  preprint)
\bibitem {FG4} L. El Fadil and I. Ga\'al, {\em On non monogenity of quartic number fields defined by $x^4+ax^2+b$},  (arxiv preprint.)
\bibitem{EMN}  L. El Fadil, J. Montes and E. Nart ,  Newton polygons and $p$-integral bases of quartic number fields \textit{J. Algebra and Appl}, \textbf{ 11(4)}, (2012), 1250073.
\bibitem{En2} H. T. Engstrom, {\em On the common index divisors of an algebraic field}, Trans. Amer.
Math. Soc. {\bf 32(2)} (1930) 223–237.
\bibitem{GPP} I. Ga\'al, A. Peth\"o and M. Pohst, On the indices of biquadratic number fields having
Galois group V4, Arch. Math., {\bf 57}, (1991)  357--361
\bibitem{GMN}
{\sc J. Guardia}, {\sc J. Montes}, and {\sc E. Nart}, {\em Newton
polygons of higher order in algebraic number theory}, Trans. Amer. Math. Soc. {\bf 364} (1) (2012) 361--416
\bibitem{Mc} S. MacLane, {\em A construction for absolute values in polynomial rings}, Trans. Amer. Math. Soc. 40 (1936) 363--395.
\bibitem{Smt}	R. Ibarra, H. Lembeck, M. Ozaslan, H. Smith, and K. E. Stange, Monogenic fields arising from trinomials, arXiv:1908.09793v2.
\bibitem{NV} P. Llorente, E. Nart and N. Vila, Decomposition of primes in number fields defined by trinomials,(J. Théorie des nombres de  Bordeaux {\bf 3}, (1991) 27--41 
\bibitem{MN}  J. Montes and E. Nart,\emph{ On a theorem of Ore}, J.  Algebra {\bf 146}(2) (1992) 318--334
\bibitem{WN}  W. Narkiewicz, Elementary and analytic theory of
algebraic numbers, Springer Verlag, 3. Auflage, 2004
\bibitem{Na}T. NAKAHARA, On the indices and integral bases of non-cyclic but abelian biquadratic fields, Arch. Math. 41 (1983) 504--508
\bibitem{Neu}
{\sc J.  Neukirch}, {\em Algebraic Number Theory},
Springer-Verlag, Berlin (1999)
\bibitem{He}  K. Hensel, Theorie der algebraischen Zahlen, \textit{Teubner Verlag, Leipzig, Berlin}, (1908)
\bibitem{Hen} K. Hensel, Arithemetishe untersuchungen uber die gemeinsamen ausserwesentliche Discriminantentheiler einer Gattung, \textit{J. Reine Angew  Math}, \textbf{113}, (1894) 128--160
\bibitem{H} K. Hensel, Untersuchung der Fundamentalgleichung einer Gattung f\"r eine reelle Primzahl als Modul und Bestimmung der Theiler ihrer Discriminante,  \textbf{(113)}, (1894) 61--83 
\bibitem{He2} K. Hensel,  Arithmetische Untersuchungen über Discriminanten und ihre ausserwesentlichen
Theiler, Dissertation, Univ. Berlin, (1884)
\bibitem{O}  O. Ore, Newtonsche Polygone in der Theorie der algebraischen Korper, \textit{ Math. Ann} \textbf{99}, (1928) 84--117
\end{thebibliography}
\end{document}